\newcommand{\mathsym}[1]{{}}
\newcommand{\unicode}[1]{{}}
\theoremstyle{plain}
\newtheorem{theorem}{Theorem}
\newtheorem{corollary}[theorem]{Corollary}
\newtheorem{proposition}[theorem]{Proposition}
\theoremstyle{definition}
\theoremstyle{remark}
\newtheorem{remark}[theorem]{Remark}
\newcommand{\R}{\mathbb R}
\newcommand{\C}{\mathbb C}
\newcommand{\half}{\tfrac{1}{2}}
\renewcommand{\geq}{\geqslant}
\begin{document}


\title[Matrix Polar Decomposition and the Blaschke-Petkantschin Formula]{Matrix Polar Decomposition and Generalisations of the Blaschke-Petkantschin Formula in Integral Geometry}
\author{Peter J. Forrester}
\address{Department of Mathematics and Statistics, University of Melbourne, Victoria 3010, Australia}
\email{pjforr@unimelb.edu.au}
\date{}


\begin{abstract}
In the work [Bull, Austr. Math. Soc. 85 (2012), 315-234], S.~R.~Moghadasi has shown how the decomposition of the $N$-fold product of Lebesgue measure on $\R^n$ implied by matrix polar decomposition can be used to derive the Blaschke-Petkantschin decomposition of measure formula from integral geometry.  We use known formulas from random matrix theory to give a simplified derivation of the decomposition of Lebesgue product measure implied by matrix polar decomposition, applying too to the cases of complex and real quaternion entries, and we give corresponding generalisations of the  Blaschke--Petkantschin formula.
A number of applications to random matrix theory and integral geometry are given, including to the calculation of the moments of the volume content
of the convex hull of $k \le N+1$ points in $\mathbb R^N$, $\mathbb C^N$ or $\mathbb H^N$ with a Gaussian or uniform distribution.
\end{abstract}


\maketitle


\section{Introduction}\label{s1}
Of fundamental importance to matrix theory is the singular value decomposition. Let $M$ be an $n\times N$ real matrix, and suppose $n\geq N$ so that the columns of $M$ can form a basis of an $N$-dimensional subspace in $\R^n$. Let $U$ ($V$) be an $n\times n$ ($N\times N$) real orthogonal matrix, and let $\Lambda$ be an $n\times N$ diagonal matrix, diagonal entries $\sigma_j$ ($j=1,\ldots,N$) each non-negative. The singular value decomposition takes the form
\begin{align}
M=U\Lambda V^T.\label{1}
\end{align}
Since $M^TM=V\Lambda^T\Lambda V^T$ one sees that the $\{\sigma_j^2\}$ are the eigenvalues of $M^TM$ and $V$ is the corresponding matrix of eigenvectors, while the fact that $MM^T=U\Lambda\Lambda^TU^T$ tells us that the $\{\sigma_j^2\}$ are the non-zero eigenvalues of $MM^T$, and the columns of $U$ are the eigenvectors.

An equivalent way to write \eqref{1} is to first note that
\begin{align*}
\Lambda=\tilde{I}_{n,N}\,\mathrm{diag}(\sigma_1,\ldots,\sigma_N),\quad\tilde{I}_{n,N}:=\begin{bmatrix}I_{N\times N}\\ 0_{(n-N)\times N}\end{bmatrix}.
\end{align*}
This shows
\begin{align}
M=U_1\,\mathrm{diag}(\sigma_1,\ldots,\sigma_N)V^T,\label{2}
\end{align}
where $U_1=U\tilde{I}_{n,N}$; thus $U_1$ consists of the first $N$ columns of $U$. Multiplying $U_1$ in \eqref{2} by $V^TV$ --- which is the identity --- on the right shows that a corollary of the singular value decomposition is the polar decomposition
\begin{align}
M=QP,\label{3}
\end{align}
where $Q=U_1V^T$ has its $N$ columns forming an orthonormal set of vectors in $\R^n$, and $P=V\,\mathrm{diag}(\sigma_1,\ldots,\sigma_N)V^T$ is an $N\times N$ real symmetric matrix.

The decompositions \eqref{2} and \eqref{3} both have analogues for $M$ complex. The singular value decomposition \eqref{2} has the form
\begin{align}
M=U_1\,\mathrm{diag}(\sigma_1,\ldots,\sigma_N)V^{\dagger},\label{4}
\end{align}
where $V$ is an $N\times N$ unitary matrix, and the columns of $U_1$ form an orthonormal set of $N$ vectors in the complex vector space $\C^n$. The polar decomposition \eqref{3} has the form
\begin{align}
M=ZH,\label{5}
\end{align}
where $Z=U_1V^{\dagger}$ also has its $N$ columns forming an orthonormal set of vectors in $\C^n$ and $H=V\,\mathrm{diag}(\sigma_1,\ldots,\sigma_N)V^{\dagger}$ is complex Hermitian.

In a recent paper S.R.~Moghadasi \cite{Mo12} has considered the decompositions of Lebesgue measure implied by \eqref{3} and \eqref{5}. In the real case an application was given to provide a new derivation of a decomposition of Lebesgue measure from integral geometry due originally to Blaschke \cite{Bl35} and Petkantschin \cite{Pe36} (or more precisely, an equivalent form thereof, due to
Miles \cite{Mi71}). A further application was given to the computation of the moments of the determinant of an $N\times N$ standard real Gaussian matrix.

It is an aim of the present paper to give a different derivation to that in \cite{Mo12} of the Lebesgue measure decompositions implied by \eqref{3} and \eqref{5}. This derivation, to be carried out in Section \ref{s2}, is deduced from knowledge of a matrix change of variables formula core to the study of the so-called Wishart matrices in random matrix theory, which in turn is equivalent to a matrix polar integration theorem.
It allows for a natural extension to the case that the entries of $M$ are real quaternions, represented as $2\times 2$ complex matrices
\begin{align}
\begin{bmatrix}z&w\\-\overline{w}&\overline{z}\end{bmatrix}.\label{6}
\end{align}
The matrix polar integration theorem can then, following \cite{Mo12}, be used to derive
Blashke-Petkantschin type decomposition of measure formulas, now for the real, complex and real quaternion cases. This is done in
Section \ref{s3}.

In Section \ref{s4}, a matrix integration formula of Section \ref{s2} is used to deduce the matrix distribution of $U (M^\dagger M)^{1/2}$,
for $U$ a random unitary matrix with entries which are real, complex or real quaternion, corresponding to those of $M$, in
the case that the matrix distribution of $M$ depends on $M^\dagger M$.
As another application,  the evaluation of the moments of  determinants associated with certain classes of random matrices 
with entries from these number fields is given, as is an affine version of this result.
In the real case, the affine version relates to the volume of the convex hull of $k \le N + 1$ points
uniformly distributed in the unit ball in $\mathbb R^N$, or on its surface, as first calculated by Kingman \cite{Ki69} and Miles \cite{Mi71}.

\section{Decomposition of Measure Implied by Matrix Polar Decomposition}\label{s2}
In the real case the singular value and polar decompositions are given by \eqref{2} and \eqref{3}, while in the complex case they are given by \eqref{4} and \eqref{5}. Also of interest is the real quaternion case, in which the entries of $M$ have the $2\times2$ block structure \eqref{6}. Since there are in general four independent real entries in \eqref{6}, we will annotate the matrices in \eqref{4} and \eqref{5} to involve superscripts $4$ so that for the singular value decomposition we have
\begin{align}
M^{(4)}=U^{(4)}_1\,\mathrm{diag}(\sigma_1\mathbb{I}_2,\ldots,\sigma_N\mathbb{I}_2)V^{(4) \, {\dagger}},\label{7}
\end{align}
and for the polar decomposition we have
\begin{align}
M^{(4)}=Z^{(4)}H^{(4)}, \label{8}
\end{align}
As a complex matrix, $M^{(4)}$ is of size $2n\times 2N$. The matrix $V^{(4)}$ is a $2N\times 2N$ unitary matrix with $2\times2$ real quaternion entries, and thus after conjugation by a permutation matrix is a member of the classical group $\mathrm{Sp}(2N)$. Each block column of $U^{{(4)}}_1$ can be regarded as a member of the real quaternion vector space $\mathbb{H}^n$, and the $N$ block columns together form an orthonormal set. Note that the singular values in \eqref{7} are doubly degenerate. This is in keeping with $W^{(4)}:=M^{(4) \, \dagger}M^{(4)}$ having the symmetry $W^{(4)}=\tilde{Z}_{2N}\overline{W}^{(4)}\tilde{Z}_{2N}^{-1}$, where
\begin{align*}
\tilde{Z}_{2N}:=\mathbb{I}_N\otimes\begin{bmatrix}0&-1\\1&0\end{bmatrix},
\end{align*}
which implies the eigenvalues of $W^{(4)}$ are doubly degenerate. In \eqref{8} $Z_4=U_1^{(4)}V^{(4) \, \dagger}$, and this $2n\times2N$ matrix has the same properties as those highlighted for $U_{1,4}$. Also $H^{(4)}=V^{(4)}\,\mathrm{diag}(\sigma_1\mathbb{I}_2,\ldots,\sigma_N\mathbb{I}_2)V^{(4) \, \dagger}$, which is a $2N\times 2N$ Hermitian matrix with real quaternion blocks \eqref{6}.

Introduce the notation $(\mathrm{d}M)$ for the product of the independent differentials (real and imaginary parts) of the entries of the matrix $M$. Let $\beta=1$, $2$ or $4$ according to the entries of $M$ being real, complex or real quaternion. As a first use of this notation, we write $\mathbb{F}_{\beta}$ to denote these number fields. For $M$ as appearing in \eqref{3}, \eqref{5} and \eqref{8}, $(\mathrm{d}M)$ consists of $\beta nN$ independent differentials.

Consider the matrix $W=M^{\dagger}M$. In the real case it follows from \eqref{5} that $W=H^2$; in the real quaternion case it follows from \eqref{8} that $W=(H^{(4)})^2$. In mathematical statistics a change of variables between the entries of $M$ and the entries of $W$ was first studied by Wishart \cite{Wi28} in the real case. This was further refined by James \cite{Ja54} (see the text book treatment in \cite{Mu82}) to obtain a change of variables formula associated with both \eqref{2} and \eqref{3}. The analogue of this in the complex case is detailed in \cite[\S 3.2.3]{Fo10}, and the modification required for the real quaternion case given in 
\cite[Ex.~3.2 q.5]{Fo10}. The change of variables formula in all three cases is conveniently summarised in \cite[Prop.~4]{DG11} and reads
\begin{align}
(\mathrm{d}M)=2^{-N}(\mathrm{det}\,W)^{\beta(n-N+1)/2-1}\,(\mathrm{d}W)\,\left(U_1^{\dagger}\mathrm{d}U_1\right).\label{9}
\end{align}
Here $\left(U_1^{\dagger}\mathrm{d}U_1\right)$ is the invariant measure on the set of $n\times N$ matrices with elements in $\mathbb{F}_{\beta}$, and having the property that the $N$ columns form an orthonormal set in the vector space $\mathbb{F}_{\beta}^n$. These matrices are said to define the Stiefel manifold, to be denoted $\mathcal{V}_{N,n}^{\beta}$ so we have $U_1\in\mathcal{V}_{N,n}^{\beta}$ and $\left(U_1^{\dagger}\mathrm{d}U_1\right)$ is the invariant measure on this manifold. Also, in terms of the singular values, we define
\begin{align}
\det W=\prod_{l=1}^N\sigma_l^2\label{10}
\end{align}
in all three cases. Thus the usual definition of the determinant is modified in the case $\beta=4$, where $W$ has eigenvalues 
$\{\sigma_l^2\}$ each with multiplicity $2$; in (\ref{10}) the multiplicity is ignored.

From \eqref{9} we can read off a matrix polar integration theorem, reproducing the main theorem, Theorem 2.5, of \cite{Mo12} which relates to the real and complex cases, and extending it to the real quaternion case.

\begin{proposition}
Let $\mathcal{M}_{n\times N}^{\beta}$ denote the set of $n\times N$ matrices with entries in $\mathbb{F}_{\beta}$. For $M\in\mathcal{M}_{n\times N}^{\beta}$ introduce the polar decomposition according to
\begin{align*}
M=U_1W^{1/2},
\end{align*}
where $U_1\in\mathcal{V}_{N,n}^{\beta}$ and $W=M^{\dagger}M\in\mathcal{P}_{+,N}^{\beta}$, $\mathcal{P}_{+,N}^{\beta}$ denoting the set of Hermitian matrices with entries in $\mathbb{F}_{\beta}$ and having all eigenvalues non-negative. For $g:\mathcal{M}_{n\times N}^{\beta}\rightarrow\R$ absolutely integrable we have
\begin{align}
\int_{\mathcal{M}_{n\times N}^{\beta}}\,g(M)\,\mathrm{d}M=2^{-N}\int_{\mathcal{V}_{N,n}^{\beta}}\,\left(U_1^{\dagger}\mathrm{d}U_1\right)\,\int_{\mathcal{P}_{+,N}^{\beta}}\,(\mathrm{d}W)\,\left(\mathrm{det}\,W\right)^{\beta(n-N+1)/2-1}\nonumber
\\\times g(U_1W^{1/2}).\label{11}
\end{align}
\end{proposition}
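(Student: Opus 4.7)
The plan is to deduce the formula (11) directly from the differential identity (9) by applying the standard change-of-variables theorem and Fubini. Only two small preliminaries are needed beyond (9) itself: (i) a verification that the polar map $M \mapsto (U_1, W)$ is an almost-everywhere smooth bijection between $\mathcal{M}_{n \times N}^\beta$ and $\mathcal{V}_{N,n}^\beta \times \mathcal{P}_{+,N}^\beta$, and (ii) the observation that the product measure on the target is $\sigma$-finite so that an absolutely integrable $g$ may be iteratively integrated.

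For (i) I would note that the rank-deficient locus $\{M : \det W = 0\}$ is the zero set of a nontrivial real polynomial in the entries of $M$ and hence has Lebesgue measure zero, while the boundary of $\mathcal{P}_{+,N}^\beta$ on which some $\sigma_j$ vanishes likewise carries $(\mathrm{d}W)$-measure zero. On the open complement where $W$ is strictly positive definite, $W^{1/2}$ is the unique positive Hermitian square root, $U_1 = M W^{-1/2}$ is a uniquely determined element of $\mathcal{V}_{N,n}^\beta$, and conversely every such pair $(U_1, W)$ produces an $M = U_1 W^{1/2}$ of full column rank. Thus the polar decomposition gives a smooth bijection between full-measure open subsets.

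Given (i), the identity (9) applies pointwise on the full-rank locus as an equality of positive densities; multiplying by $g(M) = g(U_1 W^{1/2})$ and integrating yields
\begin{align*}
\int_{\mathcal{M}_{n \times N}^\beta} g(M)\, (\mathrm{d}M) = 2^{-N} \int_{\mathcal{V}_{N,n}^\beta \times \mathcal{P}_{+,N}^\beta} g(U_1 W^{1/2})\, (\det W)^{\beta(n-N+1)/2 - 1}\, (\mathrm{d}W)\, (U_1^\dagger \mathrm{d}U_1).
\end{align*}
For (ii), $(U_1^\dagger \mathrm{d}U_1)$ is a finite invariant measure on the compact Stiefel manifold and $(\mathrm{d}W)$ is $\sigma$-finite on $\mathcal{P}_{+,N}^\beta$, so Fubini's theorem converts this product integral into the iterated form displayed in (11).

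The substantive content is thus entirely subsumed in (9), and the main obstacle, anticipated in a self-contained treatment, is the uniform validity of the Jacobian formula across $\beta \in \{1,2,4\}$, which is the content of the cited work of Wishart, James and others. A useful consistency check is a real parameter count: $(\mathrm{d}M)$ carries $\beta n N$ differentials, $(\mathrm{d}W)$ carries $N + \beta N(N-1)/2$, and $\mathcal{V}_{N,n}^\beta$ has real dimension $\beta n N - N - \beta N(N-1)/2$, so the two sides of (9) agree dimensionally. In the quaternion case one must additionally respect the convention (10), under which the doubly-degenerate eigenvalues of $W^{(4)}$ are counted once, so that the exponent $\beta(n-N+1)/2 - 1$ retains the same functional form as in the real and complex cases.
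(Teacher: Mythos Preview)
Your proposal is correct and follows the same route as the paper: the paper does not give a separate proof of this proposition but simply says that (11) is ``read off'' from the Jacobian identity (9), and your argument makes explicit the change-of-variables and Fubini justifications that underlie that reading. The only difference is that you have supplied the measure-zero and bijectivity details that the paper leaves implicit.
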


\begin{remark}
In \cite{Mo12} the factor $2^{-N}$ on the RHS is replaced by $2^{-N^2}$ in the complex case and by $2^{-N(N+1)/2}$ in the real case, due to a different normalisation of  $\left(U_1^{\dagger}\mathrm{d}U_1\right)$. The normalisation of $\left(U_1^{\dagger}\mathrm{d}U_1\right)$ inherent in \eqref{11} can readily be determined by using Gram-Schmidt orthogonalisation to write
\begin{align}
M=\tilde{U}_1T,\label{12}
\end{align}
where $\tilde{U}_1\in\mathcal{V}_{N,n}^{\beta}$ and $T$ is an $N\times N$ upper triangular matrix with non-negative diagonal entries $\{t_{ii}\}_{i=1}^N$ and strictly upper triangular entries in $\mathbb{F}_{\beta}$ (in the case $\beta=4$, $T$ is represented as a $2N\times2N$ complex matrix with diagonal entries $t_{ii}\mathbb{I}_2$). A standard change of variables formula in random matrix theory tells us (see e.g. \cite{Mu82} for the real case, \cite{Fo10} or \cite[Lemma 2.7]{DG09} for all three cases)
\begin{align}
(\mathrm{d}M)=\prod_{j=1}^N t_{jj}^{\beta(n-j+1)-1}\,(\mathrm{d}T)\,\left(\tilde{U}_1^{\dagger}\mathrm{d}\tilde{U}_1\right).\label{13}
\end{align}
Multiplying both sides by $e^{-\mathrm{Tr}\,M^{\dagger}M}=e^{-\mathrm{Tr}\,T^{\dagger}T}$, taking $\mathrm{Tr}$ in the real quaternion case to be $\half$ of its usual meaning in keeping with (\ref{10}), and integrating gives
\begin{align}
\int_{U_1\in\mathcal{V}_{N,n}^{\beta}}\,\left(U_1^{\dagger}\mathrm{d}U_1\right)=2^N\prod_{i=1}^N\frac{\pi^{\beta(n-i+1)/2}}{\Gamma\left(\beta(n-i+1)/2\right)}=\prod_{i=1}^N\sigma_{\beta(n-i+1)},\label{14}
\end{align}
where $\sigma_l=2\pi^{l/2}/\Gamma(l/2)$ is the surface area of the $(l-1)$-sphere in $\R^l$. In the real case the above calculation can be found in \cite[Ch.~2]{Mu82}.
\end{remark}

We can make use of \eqref{14} to simplify \eqref{11} in the case that $g(M)$ is independent of $U_1$. This extends matrix integration formulas from \cite[Th.~2.5]{Mo12}, applicable for real or complex entries, to the real quaternion case.

\begin{corollary}\label{C3}
For $f:\mathcal{P}_{+,N}^{\beta}\rightarrow\R$ absolutely integrable we have
\begin{multline}
\int_{\mathcal{M}_{n\times N}^{\beta}}\,f(M^{\dagger}M)\,(\mathrm{d}M)
\\=\prod_{i=1}^N\frac{\sigma_{\beta(n-i+1)}}{\sigma_{\beta(N-i+1)}}\,\int_{\mathcal{M}_{N\times N}^{\beta}}\,f(M^{\dagger}M)\,\left(\mathrm{det}\,M^{\dagger}M\right)^{\beta(n-N)/2}(\mathrm{d}M).\label{15}
\end{multline}
\end{corollary}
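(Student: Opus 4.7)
The plan is to apply the matrix polar integration formula \eqref{11} to \emph{both} sides of \eqref{15} separately, and then match the resulting integrals over $\mathcal{P}_{+,N}^{\beta}$. Since the integrand on each side depends on $M$ only through $M^{\dagger}M$ (hence only through $W$ in the polar decomposition), the $U_1$ integration on each side factorises and just contributes the total Stiefel volume given by \eqref{14}.

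First I would apply \eqref{11} to the LHS with $g(M)=f(M^{\dagger}M)$. Since $g$ depends only on $W=M^{\dagger}M$, the formula gives
\begin{align*}
\int_{\mathcal{M}_{n\times N}^{\beta}} f(M^{\dagger}M)\,(\mathrm{d}M)
= 2^{-N}\Bigl(\int_{\mathcal{V}_{N,n}^{\beta}}(U_1^{\dagger}\mathrm{d}U_1)\Bigr)\int_{\mathcal{P}_{+,N}^{\beta}}f(W)(\det W)^{\beta(n-N+1)/2-1}(\mathrm{d}W).
\end{align*}
Then I would apply \eqref{11} to the RHS, where now $n$ is replaced by $N$ (so the Stiefel manifold is $\mathcal{V}_{N,N}^{\beta}$) and the integrand is $f(M^{\dagger}M)(\det M^{\dagger}M)^{\beta(n-N)/2}$, again depending only on $W$. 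This yields
\begin{align*}
\int_{\mathcal{M}_{N\times N}^{\beta}} f(M^{\dagger}M)(\det M^{\dagger}M)^{\beta(n-N)/2}\,(\mathrm{d}M)
= 2^{-N}\Bigl(\int_{\mathcal{V}_{N,N}^{\beta}}(U_1^{\dagger}\mathrm{d}U_1)\Bigr)\int_{\mathcal{P}_{+,N}^{\beta}}f(W)(\det W)^{E}(\mathrm{d}W),
\end{align*}
with exponent $E=\beta(n-N)/2+\beta(N-N+1)/2-1=\beta(n-N+1)/2-1$. The decisive observation is that this exponent coincides exactly with that obtained on the LHS, so the two $W$-integrals cancel identically without any restriction on $f$.

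It remains only to take the ratio of the two Stiefel volumes. By the evaluation \eqref{14},
\begin{align*}
\frac{\int_{\mathcal{V}_{N,n}^{\beta}}(U_1^{\dagger}\mathrm{d}U_1)}{\int_{\mathcal{V}_{N,N}^{\beta}}(U_1^{\dagger}\mathrm{d}U_1)}
= \prod_{i=1}^N\frac{\sigma_{\beta(n-i+1)}}{\sigma_{\beta(N-i+1)}},
\end{align*}
and substituting this into the quotient of the two displays above produces \eqref{15}. There is no real obstacle; the only point requiring care is the bookkeeping of the exponent of $\det W$, in particular that the extra factor $(\det M^{\dagger}M)^{\beta(n-N)/2}$ on the RHS is precisely what is needed to promote the Jacobian exponent $\beta(N-N+1)/2-1$ (appropriate to square matrices) back up to $\beta(n-N+1)/2-1$ (appropriate to rectangular matrices), so that the two sides reduce to the same Hermitian-matrix integral.
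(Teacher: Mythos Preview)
Your proposal is correct and follows exactly the approach the paper indicates: apply \eqref{11} to each side with $g$ depending only on $W=M^\dagger M$ so that the Stiefel integral factors off, evaluate those Stiefel volumes via \eqref{14}, and observe that the remaining $W$-integrals coincide because the extra factor $(\det M^\dagger M)^{\beta(n-N)/2}$ restores the exponent from $\beta/2-1$ to $\beta(n-N+1)/2-1$. There is nothing to add.
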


\section{Blaschke-Petkantschin Formula}\label{s3}
\subsection{Blaschke-Petkantschin Formula --- Miles Version}\label{s3.1}
The Grassmannian $G_{N,n}^{\beta}$ is the set of all $N$-dimensional subspaces in $\mathbb{F}_{\beta}^n$. This set can be identified with the quotient space $\mathcal{V}_{N,n}^{\beta}/\mathrm{O}(N)$. Thus, since the elements of $\mathcal{V}_{N,n}^{\beta}$ are $N$ orthonormal vectors in $\mathbb{F}_{\beta}^n$, elements in the quotient consist of all $N$ orthonormal vectors spanning the same $N$-dimensional subspace. It follows that we have the decomposition of measure (see \cite{Ja54} for the real case)
\begin{align}
\left(\tilde{U}_1^{\dagger}\mathrm{d}\tilde{U}_1\right)=\mathrm{d}\omega_{N,n}^{\beta}\,\left(U^{\dagger}\mathrm{d}U\right),\label{17}
\end{align}
where $\tilde{U}_1\in\mathcal{V}_{N,n}^{\beta}$, $U\in\mathcal{V}_{N,N}^{\beta}$ and $\mathrm{d}\omega_{N,n}^{\beta}$ denotes the invariant measure on $G_{N,n}^{\beta}$. As a matrix decomposition
\begin{align}
\tilde{U}_1=AU\label{18}
\end{align}
for some $A$ such that the columns of $A$ form a "reference" set of orthogonal vectors spanning the particular $N$-dimensional subspace of $\mathbb{F}_{\beta}^n$. For future application we note from (\ref{17}) and (\ref{14}) that
\begin{equation}\label{18'}
{\rm vol} \, G_{N,n}^\beta =
{{\rm vol} \, {\mathcal{V}_{N,n}^{\beta}} \over {\rm vol} \, {\mathcal{V}_{N,N}^{\beta}}} =
\prod_{i=1}^N\frac{\sigma_{\beta(n-i+1)}}{\sigma_{\beta(N-i+1)}}.
\end{equation}

Making use of \eqref{17} and \eqref{18}, it was observed by Moghadasi \cite{Mo12} that in the real case \eqref{11} can be used to deduce an integration formula equivalent to the Blaschke-Petkantschin decomposition of measure formula from integral geometry \cite{Bl35}, 
\cite{Pe36}, 
\cite{BN89}, in form due to Miles \cite{Mi71}, . However the required working is not restricted to the real case, allowing for a generalisation of the Blaschke-Petkantschin formula to the complex and real quaternion cases.

\begin{proposition}
Let $g:\mathcal{M}_{N,n}^{\beta}\rightarrow\R$ be integrable. Make use of the notations $G_{N,n}^{\beta}$, $\mathrm{d}\omega_{N,n}^{\beta}$ for the Grassmannian and corresponding invariant measure as introduced above. We have
\begin{multline}
\int_{M\in\mathcal{M}_{N,n}^{\beta}}\,g(M)\,(\mathrm{d}M)
\\=\int_{A\in G_{N,n}^{\beta}}\,\mathrm{d}\omega_{N,n}^{\beta}\,\int_{M\in\mathcal{M}_{N,N}^{\beta}}\,(\mathrm{d}M)\,g(AM)\,\left(\mathrm{det}\,M^{\dagger}M\right)^{\beta(n-N)/2}.\label{19}
\end{multline}
\end{proposition}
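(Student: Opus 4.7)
The plan is to start from the right-hand side of \eqref{19} and collapse it to the left-hand side by applying the polar decomposition formula \eqref{11} twice: once (in reverse) in the special square case $n = N$ to unpack the inner integral over $\mathcal{M}_{N\times N}^\beta$, and once (forward) to reconstitute the LHS. The decomposition \eqref{17} of the Stiefel measure through the Grassmannian is what links the two applications, so that the arbitrary reference frame $A$ fuses with the $N \times N$ unitary part into a generic element of $\mathcal{V}_{N,n}^\beta$.

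First I would fix $A \in G_{N,n}^\beta$ and rewrite the inner integral $\int_{\mathcal{M}_{N\times N}^\beta} g(AM) (\det M^\dagger M)^{\beta(n-N)/2}(\mathrm{d}M)$ by applying \eqref{11} with $n$ replaced by $N$. This yields the polar decomposition $M = U W^{1/2}$ with $U \in \mathcal{V}_{N,N}^\beta$ and $W = M^\dagger M$, together with the measure
\begin{align*}
(\mathrm{d}M) = 2^{-N}\,(U^\dagger \mathrm{d}U)\,(\mathrm{d}W)\,(\det W)^{\beta/2 - 1}.
\end{align*}
The weight $(\det M^\dagger M)^{\beta(n-N)/2} = (\det W)^{\beta(n-N)/2}$ then combines additively in the exponent with the Jacobian weight, and since $\beta/2 - 1 + \beta(n-N)/2 = \beta(n-N+1)/2 - 1$, the inner integral is
\begin{align*}
2^{-N}\int_{\mathcal{V}_{N,N}^\beta}(U^\dagger \mathrm{d}U)\int_{\mathcal{P}_{+,N}^\beta}(\mathrm{d}W)\,(\det W)^{\beta(n-N+1)/2-1}\,g(AUW^{1/2}).
\end{align*}

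Next I would restore the outer integral over $A$. Setting $\tilde{U}_1 := AU \in \mathcal{V}_{N,n}^\beta$ as in \eqref{18}, the decomposition \eqref{17} identifies $\mathrm{d}\omega_{N,n}^\beta\,(U^\dagger \mathrm{d}U) = (\tilde{U}_1^\dagger \mathrm{d}\tilde{U}_1)$, so the double integral over $G_{N,n}^\beta \times \mathcal{V}_{N,N}^\beta$ collapses to a single integral over $\mathcal{V}_{N,n}^\beta$. The RHS of \eqref{19} is therefore
\begin{align*}
2^{-N}\int_{\mathcal{V}_{N,n}^\beta}(\tilde{U}_1^\dagger \mathrm{d}\tilde{U}_1)\int_{\mathcal{P}_{+,N}^\beta}(\mathrm{d}W)\,(\det W)^{\beta(n-N+1)/2-1}\,g(\tilde{U}_1 W^{1/2}),
\end{align*}
and applying \eqref{11} in the forward direction recognises this as $\int_{\mathcal{M}_{n\times N}^\beta} g(M)\,(\mathrm{d}M)$, which is the LHS of \eqref{19}.

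The routine part is mostly bookkeeping, the only subtlety being the matching of the exponents of $\det W$: the polar Jacobian for square $N \times N$ matrices supplies $(\det W)^{\beta/2-1}$, so the cosmetic-looking factor $(\det M^\dagger M)^{\beta(n-N)/2}$ in \eqref{19} is precisely what is needed to upgrade this to the exponent $\beta(n-N+1)/2 - 1$ that appears in the rectangular polar integration formula. Apart from this small index arithmetic and the careful use of \eqref{17} to identify the pair $(A,U)$ with $\tilde{U}_1$, no further ingredients are required.
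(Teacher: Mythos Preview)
Your proof is correct and follows essentially the same approach as the paper: both arguments hinge on two applications of the polar integration formula \eqref{11} (once in the rectangular $n\times N$ case and once in the square $N\times N$ case) linked by the Stiefel-to-Grassmannian decomposition \eqref{17}--\eqref{18}. The only cosmetic difference is that the paper derives an intermediate expression \eqref{20} from the LHS and an intermediate expression \eqref{21} from the RHS and then matches them, whereas you run the same steps linearly from the RHS back to the LHS.
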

\begin{proof}
Substituting \eqref{17} and \eqref{18} in the RHS of \eqref{11} this reads
\begin{multline}
\int_{\mathcal{M}_{n\times N}^{\beta}}\,g(M)\,(\mathrm{d}M)=2^{-N}\int_{A\in G_{N,n}^{\beta}}\,\mathrm{d}\omega_{N,n}^{\beta}\,\int_{U\in\mathcal{V}_{N,N}^{\beta}}\,\left(U^{\dagger}\mathrm{d}U\right)
\\ \times\int_{W\in\mathcal{P}_{+,N}^{\beta}}\,(\mathrm{d}W)\,g(AUW^{1/2})\,\left(\mathrm{det}\,W\right)^{\beta(n-N+1)/2-1}.\label{20}
\end{multline}
On the other hand, in the case $n=N$, and with $g(M)$ replaced by 
$$
g(AM)\,\left(\mathrm{det}\,M^{\dagger}M\right)^{\beta(n-N)/2},
$$
these same substitutions in \eqref{11} tell us that
\begin{multline}
\int_{\mathcal{M}_{N\times N}^{\beta}}\,g(AM)\,\left(\mathrm{det}\,M^{\dagger}M\right)^{\beta(n-N)/2}\,(\mathrm{d}M) \\
= 2^{-N} \int_{U \in\mathcal{V}_{N,N}^{\beta}}\,\left(U^{\dagger}\mathrm{d}U\right)
\int_{W\in\mathcal{P}_{+,N}^{\beta}}\,(\mathrm{d}W)\,g(AUW^{1/2})\,\left(\mathrm{det}\,W\right)^{\beta(n-N+1)/2-1}.\label{21}
\end{multline}
Substituting (\ref{21}) in (\ref{20}) gives (\ref{19}).
\end{proof}

\subsection{Blaschke-Petkantschin Formula --- Affine Version}\label{s3.1a}
One viewpoint on (\ref{19}) is as the decomposition of measure
\begin{equation}\label{19'}
\prod_{k=1}^N \mathrm{d} \mathbf v_k^n = \Big | \det [ \mathbf v_k^N ]_{k=1}^N \Big |^{\beta ( n - N)}
\prod_{k=1}^N \mathrm{d} \mathbf v_k^N \mathrm{d} \omega_{N,n}^\beta,
\end{equation}
where $ \mathbf v_k^n \in (\mathbb F_\beta)^n$, while $ \mathbf v_k^N \in  (\mathbb F_\beta)^N$ is the
co-ordinate for $ \mathbf v_k^n$ in a particular basis for ${\rm Span} \, \{  \mathbf v_k^n \}_{k=1}^N$, and
$d \omega_{N,n}^\beta$ is the invariant measure on subspaces (the Grassmannian $G_{N,n}^\beta$) formed by the span.
Following \cite{Mi71}, consider $\mathbf v_k^n \in (\mathbb F_\beta)^n$ for $k=0,1,\dots,N$, and define
$\mathbf z_k^n = \mathbf v_k^n- \mathbf v_0^n$ $(k=1,\dots,N)$. Then we have
\begin{align}\label{19.1}
\prod_{k=0}^N \mathrm{d} \mathbf v_k^n & = d \mathbf v_0^n \prod_{k=1}^N \mathrm{d} \mathbf z_k^n \nonumber \\
& =  \mathrm{d} \mathbf v_0^n  \Big | \det [ \mathbf z_k^N ]_{k=1}^N \Big |^{\beta (n - N)}
\prod_{k=1}^N \mathrm{d} \mathbf z_k^N \mathrm{d} \omega_{N,n}^\beta, 
\end{align}
where the second equality follows upon applying (\ref{19'}) to $\{  z_k^n \}_{k=1}^N$. With $B$ an
$n \times N$ matrix with columns forming an orthogonal set which is a basis for Span$\, \{ \mathbf v_k^N - \mathbf v_0^N \}_{k=1}^N$,
we have $\mathbf z_k^n = B \mathbf z_k^N$. Now define $\mathbf z_k^N = \mathbf v_k^N -
\mathbf v_0^N$. Then (\ref{19.1}) can be rewritten
\begin{equation}\label{19.2}
\prod_{k=0}^N \mathrm{d} \mathbf v_k^n =  \mathrm{d} \mathbf v_0^n 
\Big | \det [ \mathbf v_k^N - \mathbf v_0^N ]_{k=1}^N \Big |^{\beta ( n - N)}
\prod_{k=1}^N \mathrm{d} \mathbf v_k^N \, \mathrm{d} \omega_{N,n}^\beta.
\end{equation}
Finally, write $\mathbf v_0^n = B \mathbf v_0^N + \mathbf r$, where $\mathbf r$ is an element of the orthogonal complement
of the column space of $B$, and let $\mathrm{d} S_{n-N}^{\perp,\beta}$ denote the volume element associated with this subspace.
This reduces (\ref{19.2}) to the affine form of (\ref{19'}).

\begin{proposition}
We have 
\begin{equation}\label{19.3}
\prod_{k=0}^N \mathrm{d} \mathbf v_k^n =  
\Big | \det [ \mathbf v_k^N - \mathbf v_0^N ]_{k=1}^N \Big |^{\beta ( n - N)}
\prod_{k=0}^N \mathrm{d} \mathbf v_k^N \, \mathrm{d} \omega_{N,n}^\beta \, \mathrm{d} S_{n-N}^{\perp,\beta}.
\end{equation}
\end{proposition}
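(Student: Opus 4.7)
The plan is to obtain (\ref{19.3}) by reorganising the subspace decomposition (\ref{19}), reinterpreted as the pointwise measure identity (\ref{19'}), via two translations followed by an orthogonal splitting; this executes the chain (\ref{19.1})--(\ref{19.2}) sketched in the paragraph preceding the statement. First I would identify (\ref{19}) with the pointwise form (\ref{19'}): regarding the $N$ columns of $M$ as vectors $\mathbf v_k^n \in (\mathbb F_\beta)^n$, fixing an orthonormal basis matrix $A$ for their span and writing $\mathbf v_k^n = A \mathbf v_k^N$, the change-of-variables factor in (\ref{19}) is exactly $|\det[\mathbf v_k^N]_{k=1}^N|^{\beta(n-N)}$, while the Grassmannian measure $\mathrm d\omega_{N,n}^\beta$ accounts for the choice of span.

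Next I would introduce the base point. By translation invariance of Lebesgue measure on $(\mathbb F_\beta)^n$, setting $\mathbf z_k^n = \mathbf v_k^n - \mathbf v_0^n$ for $k=1,\dots,N$ yields $\prod_{k=0}^N \mathrm d \mathbf v_k^n = \mathrm d \mathbf v_0^n \prod_{k=1}^N \mathrm d \mathbf z_k^n$, to which (\ref{19'}) applies with $B$ an orthonormal basis matrix for the span of the $\mathbf z_k^n$. A further translation $\mathbf z_k^N = \mathbf v_k^N - \mathbf v_0^N$ in $(\mathbb F_\beta)^N$, which has unit Jacobian, produces (\ref{19.2}). The remaining task is to split $\mathrm d \mathbf v_0^n$ via the orthogonal direct sum $(\mathbb F_\beta)^n = \mathrm{Span}(B) \oplus \mathrm{Span}(B)^\perp$: writing $\mathbf v_0^n = B \mathbf v_0^N + \mathbf r$ with $\mathbf r \in \mathrm{Span}(B)^\perp$, orthonormality of the columns of $B$ makes this linear map an isometry, so Lebesgue measure factorises as $\mathrm d \mathbf v_0^n = \mathrm d \mathbf v_0^N \, \mathrm d S_{n-N}^{\perp,\beta}$. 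Substituting into (\ref{19.2}) yields (\ref{19.3}).

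The main subtlety I expect is ensuring that this orthogonal splitting is truly an isometry: the columns of $B$ must be taken to be orthonormal (not merely orthogonal, as a literal reading of the preceding paragraph might suggest) so that no extra Jacobian appears. One should also verify that the integrand is insensitive to the residual ambiguity in the choice of orthonormal basis $B$ for a given subspace; this is immediate because only $|\det[\cdot]|$ and quantities intrinsic to the span enter, both of which are invariant under right multiplication of $B$ by an element of $\mathcal{V}_{N,N}^\beta$. Once these identifications are in place, the derivation amounts to the displayed chain of translations plus the orthogonal change of basis.
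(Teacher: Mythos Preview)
Your proposal is correct and follows essentially the same route as the paper: the paper's argument is precisely the chain (\ref{19'})$\to$(\ref{19.1})$\to$(\ref{19.2})$\to$(\ref{19.3}) via the translation $\mathbf z_k^n=\mathbf v_k^n-\mathbf v_0^n$, application of (\ref{19'}) to the $\mathbf z_k^n$, the further translation $\mathbf z_k^N=\mathbf v_k^N-\mathbf v_0^N$, and the orthogonal splitting $\mathbf v_0^n=B\mathbf v_0^N+\mathbf r$. Your remark that the columns of $B$ must be orthonormal (not merely orthogonal) for the splitting to be an isometry is well taken and sharpens the paper's wording.
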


The formula (\ref{19.3}) is the decomposition of measure given in the original works \cite{Bl35} and \cite{Pe36}. With
$B$ and $\mathbf r$ as specified in the above text, we note that $\mathbf v_k^n = B \mathbf v_k^N + \mathbf r$, and
in particular
\begin{equation}\label{19.4}
\| \mathbf v_k^n \| = \| \mathbf v_k^N \|^2 +  \|  \mathbf r \|^2.
\end{equation}
Also, the fact that $B^\dagger B = I_N$ allows us to read off from (\ref{19.3}) an analogue of the matrix integration formula
(\ref{15}).

\begin{corollary}\label{C3.6}
For $f:\mathcal{P}_{+,N}^{\beta}\rightarrow\R$ absolutely integrable, and let $V_n = [ \mathbf v_k^n - \mathbf v_0^n]$. We have
\begin{multline}
\int_{\mathbf v_k^n \in (\mathbb F_\beta)^n}\,f(V_n^{\dagger}V_n)\, \prod_{k=0}^N \mathrm{d} \mathbf v_k^n
\\=\prod_{i=1}^N\frac{\sigma_{\beta(n-i+1)}}{\sigma_{\beta(N-i+1)}}\,\int_{\mathbf v_0^n \in (\mathbb F_\beta)^n, \mathbf v_k^N \in (\mathbb F_\beta)^N}\,f(V_N^{\dagger}V_N)\,| \det V_N |^{\beta(n-N)} \,  \mathrm{d} \mathbf v_0^n
\prod_{k=1}^N \mathrm{d} \mathbf v_k^N  .\label{15p}
\end{multline}
\end{corollary}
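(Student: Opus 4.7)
My plan is to apply the affine Blaschke--Petkantschin decomposition (\ref{19.3}) directly to the left-hand side of (\ref{15p}), and then collapse the Grassmannian factor $\mathrm{d}\omega_{N,n}^\beta$ against the normalising constant in (\ref{18'}) while recombining the remaining orthogonal-complement factor with $\mathrm{d}\mathbf v_0^N$. The first key observation is that, in the setup preceding (\ref{19.3}), each column of $V_n$ satisfies $\mathbf v_k^n - \mathbf v_0^n = B(\mathbf v_k^N - \mathbf v_0^N)$, so that $V_n = B V_N$. Since $B$ has $\beta$-orthonormal columns, $B^\dagger B = I_N$, hence $V_n^\dagger V_n = V_N^\dagger V_N$, and in particular $f(V_n^\dagger V_n) = f(V_N^\dagger V_N)$. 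At the same time the Jacobian factor in (\ref{19.3}) becomes exactly $|\det V_N|^{\beta(n-N)}$.

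Inserting these identities into (\ref{19.3}), the left-hand side of (\ref{15p}) becomes
$$\int f(V_N^\dagger V_N)\,|\det V_N|^{\beta(n-N)}\, \prod_{k=0}^N \mathrm{d}\mathbf v_k^N\, \mathrm{d}\omega_{N,n}^\beta\, \mathrm{d} S_{n-N}^{\perp,\beta},$$
with an integrand that depends only on $V_N$. Next I would perform the $\mathrm{d}\omega_{N,n}^\beta$ integration: because the integrand is independent of the subspace, this simply produces the total volume of the Grassmannian $G_{N,n}^\beta$, which by (\ref{18'}) is the prefactor $\prod_{i=1}^N \sigma_{\beta(n-i+1)}/\sigma_{\beta(N-i+1)}$ seen on the right-hand side of (\ref{15p}).

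The remaining step is to merge $\mathrm{d}\mathbf v_0^N\, \mathrm{d} S_{n-N}^{\perp,\beta}$ into $\mathrm{d}\mathbf v_0^n$. This is immediate from the relation $\mathbf v_0^n = B \mathbf v_0^N + \mathbf r$ introduced before Proposition (\ref{19.3}): because $B$ is $\beta$-orthonormal and $\mathbf r$ ranges over the orthogonal complement of the column space of $B$, the map $(\mathbf v_0^N, \mathbf r) \mapsto \mathbf v_0^n$ is a volume-preserving bijection onto $(\mathbb F_\beta)^n$, with Jacobian $1$. Combining the three steps gives (\ref{15p}).

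I do not anticipate any real obstacle, as every ingredient is already present in the excerpt. The one subtlety worth noting is that the reference frame $B$ depends on the Grassmannian variable being integrated out by $\mathrm{d}\omega_{N,n}^\beta$; however, since after the substitution the integrand $f(V_N^\dagger V_N) |\det V_N|^{\beta(n-N)}$ is manifestly subspace-invariant, this dependence does not interfere with the factorisation and the three measures split cleanly. The result is thus the natural affine counterpart of Corollary \ref{C3}, produced by the same mechanism that deduced (\ref{19}) from (\ref{11}) in the linear setting.
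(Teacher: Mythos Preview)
Your proposal is correct and follows essentially the same route as the paper. The paper does not spell out a proof; it simply states that because $B^\dagger B = I_N$ one can ``read off from (\ref{19.3}) an analogue of the matrix integration formula (\ref{15}),'' and your three steps --- using $V_n = BV_N$ to identify $V_n^\dagger V_n = V_N^\dagger V_N$, integrating out the Grassmannian via (\ref{18'}), and recombining $\mathrm{d}\mathbf v_0^N\,\mathrm{d}S_{n-N}^{\perp,\beta}$ into $\mathrm{d}\mathbf v_0^n$ --- are exactly what that reading-off entails.
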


\section{Applications}\label{s4}
\subsection{Induced Random Matrix Ensembles}
Suppose the random matrix $M \in \mathcal M_{n \times M}^\beta$ has PDF of the form $P(M^\dagger M)$. The
corresponding induced ensemble is defined as the set of random matrices of the form $K = U (M^\dagger M)^{1/2}$,
where $U \in \mathcal{V}_{N,N}^{\beta}$. It is known \cite{FBKSZ12} that the PDF of $K$ is proportional to 
$(\det K^\dagger K)^{\beta(n-N)/2} P(K^\dagger K)$. Here it will be shown that this result, together with the
proportionality constant, can be deduced as a consequence of Corollary \ref{C3}.

\begin{corollary}
Let $M, U$ and $K$ be specified as in the above paragraph. The PDF of $K$ is equal to 
\begin{equation}\label{22}
\prod_{i=1}^N\frac{\sigma_{\beta(n-i+1)}}{\sigma_{\beta(N-i+1)}}\,
(\det K^\dagger K )^{\beta (n - N)/2} P(K^\dagger K).
\end{equation}
\end{corollary}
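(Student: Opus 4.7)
My plan is to test the claimed PDF against an arbitrary bounded function $g:\mathcal{M}_{N\times N}^{\beta}\to\R$ and verify that $\mathbb{E}[g(K)]$ agrees with the integral of $g$ against it. By definition of the induced ensemble,
\[
\mathbb{E}[g(K)] = \frac{1}{\mathrm{vol}(\mathcal{V}_{N,N}^{\beta})}\int_{\mathcal{V}_{N,N}^{\beta}}(U^{\dagger}\,dU)\int_{\mathcal{M}_{n\times N}^{\beta}}(dM)\,P(M^{\dagger}M)\,g(U(M^{\dagger}M)^{1/2}).
\]
For each fixed $U$, the $M$-integrand is a function of $M^{\dagger}M$ alone, which is exactly the hypothesis of Corollary \ref{C3}. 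Applying it replaces the $M$-integration over $\mathcal{M}_{n\times N}^{\beta}$ by one over $\mathcal{M}_{N\times N}^{\beta}$, producing the ratio of Stiefel-manifold volumes $\prod_{i=1}^{N}\sigma_{\beta(n-i+1)}/\sigma_{\beta(N-i+1)}$ together with the Jacobian factor $(\det M^{\dagger}M)^{\beta(n-N)/2}$.

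Next I would eliminate the remaining $U$ inside $g$ by two invariance steps. Writing the polar decomposition $M = VH$ of the now-square matrix $M$, with $V\in\mathcal{V}_{N,N}^{\beta}$ and $H = (M^{\dagger}M)^{1/2}$, right-invariance of the Haar measure on $\mathcal{V}_{N,N}^{\beta}$ under $U\mapsto UV$ converts $g(UH)$ into $g(UVH)=g(UM)$; then, for each fixed $U$, the change of variables $K=UM$ has unit Jacobian on $\mathcal{M}_{N\times N}^{\beta}$ and gives $K^{\dagger}K = M^{\dagger}M$, so the integrand collapses to $g(K)(\det K^{\dagger}K)^{\beta(n-N)/2}P(K^{\dagger}K)$, which no longer involves $U$. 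The $U$-integration then contributes $\mathrm{vol}(\mathcal{V}_{N,N}^{\beta})$, cancels the normalisation, and leaves
\[
\mathbb{E}[g(K)] = \prod_{i=1}^{N}\frac{\sigma_{\beta(n-i+1)}}{\sigma_{\beta(N-i+1)}}\int_{\mathcal{M}_{N\times N}^{\beta}}g(K)\,(\det K^{\dagger}K)^{\beta(n-N)/2}\,P(K^{\dagger}K)\,(dK).
\]
Since $g$ is arbitrary, the PDF of $K$ is identified with \eqref{22}.

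The argument is essentially routine; the only real obstacle is bookkeeping — keeping the normalisation of $(U^{\dagger}\,dU)$ fixed by \eqref{14} consistent throughout, and confirming that the left-invariance of $(dM)$ under $\mathcal{V}_{N,N}^{\beta}$, immediate for $\beta=1,2$, extends to $\beta=4$ via the $2N\times 2N$ complex representation of real quaternion matrices employed elsewhere in the paper. Folding the nontrivial Jacobian into Corollary \ref{C3} at the outset sidesteps the explicit $2^{-N}$ factor and $\mathcal{P}_{+,N}^{\beta}$ integration that one would otherwise have to track through \eqref{11}, and this is what keeps the derivation short.
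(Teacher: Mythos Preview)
Your argument is correct and follows essentially the same route as the paper: both apply Corollary~\ref{C3} to reduce the $n\times N$ integration to an $N\times N$ one, and then use the polar decomposition of the square matrix together with bi-invariance of Haar measure on $\mathcal{V}_{N,N}^{\beta}$ to eliminate $U$. The only difference is presentational---the paper works directly with the delta function $\delta(K-U(M^{\dagger}M)^{1/2})$ and asserts that in the square case it may be replaced by $\delta(K-M)$, whereas your test-function formulation makes the underlying substitutions $U\mapsto UV$ and $K=UM$ explicit; this is arguably cleaner, but the content is the same.
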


\begin{proof}
From the definitions the PDF of $K$ is equal to
\begin{equation}\label{23}
\int_{\mathcal{V}_{N,N}^{\beta}} [U^\dagger {\rm d} U] \int_{ \mathcal M_{n \times N}^\beta} (d M) \,
\delta(K - U(M^\dagger M)^{1/2}) \, P(M^\dagger M),
\end{equation}
where $ [U^\dagger {\rm d} U]$ denotes the normalised invariant measure on $\mathcal{V}_{N,N}^{\beta}$. Making use of
(\ref{15}) with $f(M^\dagger M) = \delta(K - U(M^\dagger M)^{1/2}) \, P(M^\dagger M)$ this is equal to
\begin{multline}\label{24}
\prod_{i=1}^N\frac{\sigma_{\beta(n-i+1)}}{\sigma_{\beta(N-i+1)}}\,
\int_{\mathcal{V}_{N,N}^{\beta}} [U^\dagger {\rm d} U] \int_{ \mathcal M_{n \times N}^\beta} (d M) \,
\delta(K - U(M^\dagger M)^{1/2}) \\
\times \left(\mathrm{det}\,M^{\dagger}M\right)^{\beta(n-N)/2}  P(M^\dagger M).
\end{multline}
From the matrix polar decomposition we know that for a unique $U \in \mathcal{V}_{N,N}^{\beta}$ and with
$M \in { \mathcal M_{N \times N}^\beta}$, we have $M = U(M^\dagger M)^{1/2}$. Hence in (\ref{24}) (but not in
(\ref{23}) we can replace the delta function by $\delta (K - M)$. The integration over $U$ thus decouples, giving unity,
while the integration over $M$ gives (\ref{20}).
\end{proof}

\subsection{Moments of Determinants}
In the real case, a well established application of the Blascke-Petkantschin formula (\ref{19}) is to the evaluation
of the moments of $\det M^\dagger M$ in the case that the PDF for the distribution of $M$ is a function of the
lengths $\| \mathbf m_k \|$, where $ \mathbf m_k$ denotes the $k$-th column of $M$
(see \cite{Mi71}, \cite[\S 3.1.3]{Ma99}). Since $A$ in (\ref{19}) has the property that $A^\dagger A = I_N$, and since the
$k$-th column of $AM$ is $A \mathbf m_k$, we see that choosing $g(M)$ to be a function of the lengths of the columns
we have  $p(M) = p(AM)$. Making use too of (\ref{18'}) shows that in this circumstance the
 Blaschke--Petkantschin formula reduces to
 \begin{multline}
\int_{\mathcal{M}_{n\times N}^{\beta}}\,g(M)\,(\mathrm{d}M)
\\=\prod_{i=1}^N\frac{\sigma_{\beta(n-i+1)}}{\sigma_{\beta(N-i+1)}}\,\int_{\mathcal{M}_{N\times N}^{\beta}}\,g(M)\,\left(\mathrm{det}\,M^{\dagger}M\right)^{\beta(n-N)/2}(\mathrm{d}M)\label{25}
\end{multline}
(cf.~(\ref{15})).

A simple example depending only on the lengths of the columns is when
\begin{equation}\label{26}
g(M) = \prod_{i=1}^N {p}(\| \mathbf m_i \|).
\end{equation}
Let us normalise $g(M)$ for $M \in \mathcal M_{N \times N}$ by requiring that
$$
\int_{\mathbb (F_\beta)^N} {p}( \| \mathbf m \| ) \, \mathrm d  \mathbf m =
\sigma_{\beta N} \int_0^\infty r^{\beta N - 1} {p}(r) \, \mathrm dr = 1,
$$
where the first equality follows by converting to polar coordinates. So if we choose ${p}(r)$ to be  uniform
for $0 < r < 1$ and zero for $r > 1$, to be correctly normalised we should set
\begin{equation}\label{27}
{p}(r) = {\beta N \over \sigma_{\beta N}} \chi_{0 < r < 1},
\end{equation}
where $\chi_S = 1$ for $S$ true and $\chi_S = 0$ otherwise. And requiring each $\mathbf m_i$ to have length unity is achieved
by choosing
\begin{equation}\label{27'}
{p}(r) = {1 \over \sigma_{\beta N}} \delta(1 - \| \mathbf m \|).
\end{equation}
Also of interest is the Gaussian distribution
\begin{equation}\label{27a}
P(M) = \Big ( {1 \over \pi} \Big )^{\beta N^2/2} e^{- {\rm Tr} \, M^\dagger M} :=
\Big ( {1 \over \pi} \Big )^{\beta N^2/2} e^{- \sum_{l=1}^N \| \mathbf m_l \|^2 }
\end{equation}
(this in the real quaternion case Tr is defined as ${1 \over 2}$ its usual value; cf.~(\ref{10})) for which
\begin{equation}\label{27b}
{p}(r) = \Big ( {1 \over \pi} \Big )^{\beta N/2} e^{- r^2}.
\end{equation}

The corresponding moments of $\det M^\dagger M$ can be
read off from (\ref{25}).

\begin{proposition}\label{P7}
Let the PDF of the $N \times N$ matrix $M$ be of the form (\ref{26}) with ${p}(r)$ specified by (\ref{27}). We have, for 
$\mathbf m_i \in (\mathbb F_\beta)^N$, $(i=1,\dots,N)$, and $q$ such that both sides are well defined
 \begin{multline}\label{35}
\Big ( {\beta N \over \sigma_{\beta N}} \Big )^N
\int_{ \| \mathbf m_i \| \le 1}
| \det M |^{q} \, (\mathrm d M) =
\prod_{i=1}^N\frac{\sigma_{\beta(N-i+1)}}{\sigma_{\beta(n-i+1)}}\,
\Big ( {N  \sigma_{\beta n} \over n  \sigma_{\beta N}} \Big )^{N } \bigg |_{n = N + q/\beta}
\end{multline}
and
 \begin{multline}\label{35'}
\Big ( {1 \over \sigma_{\beta N}} \Big )^N
\int_{ \| \mathbf m_i \| = 1}
| \det M |^{q} \, (\mathrm d M) =
\prod_{i=1}^N\frac{\sigma_{\beta(N-i+1)}}{\sigma_{\beta(n-i+1)}}\,
\Big ( {  \sigma_{\beta n} \over   \sigma_{\beta N}} \Big )^{N } \bigg |_{n = N + q/\beta}.
\end{multline}
In the Gaussian case (\ref{27a}),
 \begin{multline}\label{35c}
 \Big ( {1 \over \pi} \Big )^{\beta N^2/2}
 \int_{(\mathbb F_\beta)^N}   e^{- \sum_{l=1}^N \| \mathbf m_l \|^2 }  | \det M |^{q}  \, (\mathrm d M) \\
 = 
  \prod_{i=1}^N\frac{\sigma_{\beta(N-i+1)}}{\sigma_{\beta(n-i+1)}}\,
\Big ( {1 \over \pi} \Big )^{\beta N (N - n)/2}  \Big |_{n = N + q/\beta}.
\end{multline}

\end{proposition}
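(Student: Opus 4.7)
The plan is to apply formula (\ref{25}) to the factorised choice $g(M)=\prod_{i=1}^{N}p(\|\mathbf m_i\|)$, so that the $(n,N)$‑matrix integral on the left of (\ref{25}) decouples into $N$ independent radial integrals over $(\mathbb F_\beta)^n$, while the $(N,N)$‑matrix integral on the right is precisely the desired moment of $|\det M|$. The identification $(\det M^\dagger M)^{\beta(n-N)/2}=|\det M|^{\beta(n-N)}$ together with the substitution $n=N+q/\beta$ (so that $\beta(n-N)=q$) converts the Jacobian exponent in (\ref{25}) into the $|\det M|^q$ appearing in the proposition.

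Concretely, the left-hand side of (\ref{25}) becomes
\begin{equation*}
\Bigl(\int_{(\mathbb F_\beta)^n}p(\|\mathbf m\|)\,\mathrm d\mathbf m\Bigr)^N
=\Bigl(\sigma_{\beta n}\int_0^\infty r^{\beta n-1}p(r)\,\mathrm dr\Bigr)^N,
\end{equation*}
which I would evaluate separately for the three prescriptions of $p$: for the uniform density (\ref{27}) the radial integral is $N\sigma_{\beta n}/(n\sigma_{\beta N})$; for the surface measure (\ref{27'}) it is $\sigma_{\beta n}/\sigma_{\beta N}$; and for the Gaussian weight (\ref{27b}) it equals $\pi^{\beta(n-N)/2}$, using $\sigma_{\beta n}\Gamma(\beta n/2)/2=\pi^{\beta n/2}$. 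The right-hand side of (\ref{25}) equals $\prod_{i=1}^{N}\bigl(\sigma_{\beta(n-i+1)}/\sigma_{\beta(N-i+1)}\bigr)$ times the normalising constants of $p$ raised to the $N$-th power, times the sought integral $\int|\det M|^q\,(\mathrm dM)$ over the appropriate domain.

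Equating the two sides, inverting the Grassmannian volume ratio $\prod_i\sigma_{\beta(n-i+1)}/\sigma_{\beta(N-i+1)}$, and collecting the normalisation constants of $p$ on the left produces each of (\ref{35}), (\ref{35'}) and (\ref{35c}) in turn. In the Gaussian case one just needs to observe that $\prod_{i}p(\|\mathbf m_i\|)=(1/\pi)^{\beta N^2/2}e^{-\mathrm{Tr}\,M^\dagger M}$, with the real-quaternion convention for $\mathrm{Tr}$ recorded at (\ref{10}), and to rewrite $\pi^{\beta N(n-N)/2}=(1/\pi)^{\beta N(N-n)/2}$ to match the stated exponent.

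There is no substantive obstacle: the Blaschke–Petkantschin reduction (\ref{25}) already carries the geometric content, and what remains is purely the bookkeeping of one-dimensional radial integrals and normalisation constants. The only point that requires a little care is ensuring that the radial measure $\sigma_{\beta n}r^{\beta n-1}\,\mathrm dr$ is used consistently (in particular for $\beta=4$, where a quaternion vector in $\mathbb H^n$ has $4n$ real degrees of freedom), so that the $n$-dependence emerging from the radial integrals is the same $n$ as dictated by $n=N+q/\beta$.
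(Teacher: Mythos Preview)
Your approach is exactly the one the paper takes: apply (\ref{25}) with $g(M)=\prod_i p(\|\mathbf m_i\|)$, factorise the left side into $N$ identical radial integrals over $(\mathbb F_\beta)^n$, evaluate these for each of the three choices of $p$, and then solve for the $N\times N$ moment integral. The bookkeeping you describe is correct.

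There is, however, one genuine step you have not addressed. Formula (\ref{25}) is derived from the Blaschke--Petkantschin decomposition (\ref{19}), where $n$ is the dimension of an ambient space and is therefore a positive integer with $n\ge N$. Your substitution $n=N+q/\beta$ is thus only literally justified when $q\in\beta\,\mathbb Z_{\ge 0}$; for general $q$ (as in the statement, ``$q$ such that both sides are well defined'') you still need to argue that the identity extends off this arithmetic progression. The paper handles this by invoking Carlson's theorem: both sides of each of (\ref{35}), (\ref{35'}), (\ref{35c}) are analytic in $q$ in a right half-plane, agree at the integers $q=\beta(n-N)$, and satisfy the growth bound $\mathrm O(e^{\mu|q|})$ with $\mu<\pi$, so they coincide identically. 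Without this (or an equivalent analytic-continuation argument), your derivation establishes the formulas only for a discrete set of $q$.
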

\begin{proof}
The only point that remains is to justify extending from values $q = (n - N)\beta$ with $n \in \mathbb Z_{\ge 0}$ to the
general values of $q$. This can be done by appealing to Carlson's theorem (see e.g.~\cite[Prop,~4.1.4]{Fo10}), which
tells is that if two functions, analytic in the right half plane, agree at the integers and are bounded by
${\rm O}(e^{\mu |z|})$, $\mu < \pi$, in this region then the functions are identical.
\end{proof}

\begin{remark} In the real case (\ref{27a}) corresponds to a result of Wilks \cite{Wi32}; the method of derivation presented
here is that of \cite{Mo12}. For each $\beta = 1,2$ and 4 both (\ref{35'}) and (\ref{35c}) can be found in \cite{Ro07},
where the derivation made use the Jacobian for a QR type decomposition; see also \cite{Ma98,Ma99}.
Introducing polar coordinates $\mathbf m_i = r_i \mathbf q_i$ when $\| \mathbf  q_i \| = 1$ in  (\ref{36}), the dependence on
$r_i$ can be integrated out and, after use of the explicit formula for $\sigma_l$ below (\ref{14}),
(\ref{35'}) reclaimed. This same procedure can be used in (\ref{35}) to provide another derivation of (\ref{35'}).
\end{remark}

\begin{remark} Differentiating both sides of (\ref{35})--(\ref{27a}) with respect to $q$ and setting
$q=0$ provides us with the evaluation of $\langle \log | \det M | \rangle$. In the Gaussian case, this average has appeared
before, in the context of calculating the Lyapunov exponents for the random matrix product
\begin{equation}\label{LG}
L_t = M^{(t)} M^{(t-1)} \cdots M^{(1)},
\end{equation}
where each $M^{(j)}$ is chosen from the distribution of $M$ \cite{Ne86,Fo13,Fo15}. In relation to this problem, one recalls from
the multiplicative ergodic theorem of Oseledec that the limiting matrix $V_N := \lim_{t \to \infty} (L_t^\dagger L_t)^{1/(2t)}$ is well
defined. The eigenvalues of $V_N$ are all non-negative, and when written in the form $\{e^{\mu_l} \}_{l=1}^N$, the
exponents $\mu_l$ are the Lyapunov exponents. Let $\mathbf u$ be a unit vector. In the case that each column of $M^{(j)}$
in (\ref{LG}) depends only on its length,  the distribution of $\mathbf u^\dagger M^{(j)}$ is independent of $\mathbf u$, and a
result of Raghunathan  relating to the partial sums of the Lyapunov exponents tells us that in this setting
\begin{equation}\label{LG1}
\sum_{i=1}^N \mu_i = \langle \log | \det M | \rangle.
\end{equation}
\end{remark}

The results of Proposition \ref{P7} when used in the RHS of (\ref{15}) provide us with an extension of the moment
formulas to now apply to $\det M^\dagger M$ when $M = [\mathbf m_k^n]_{k=1}^N$ is an $n \times N$ matrix with entries
with entries in $\mathbb F_\beta$. In the real case, this was first noticed and made explicit in \cite{Mi71}.

\begin{corollary}\label{C3.8}
With $M$ specified as above, let $M_N = [\mathbf m_k^N]_{k=1}^N$ correspond to the $N \times N$ case.
Let the notation $\langle \cdot \rangle_\#$ refer to the expectation with respect to the PDF relating to $\#$.
Set $A_{\beta,n,N} = \prod_{i=1}^N \sigma_{\beta(n-i+1)} / \sigma_{\beta(N-i+1)}$. We have
\begin{align*}
\Big \langle ( \det M^\dagger M)^{h/2}  \Big  \rangle_{\| \mathbf m_k^n \| \le 1} & =
A_{\beta,n,N} \Big ( {n \sigma_{\beta N} \over N \sigma_{\beta n} }   \Big )^N 
\Big \langle | \det M_N |^{h + \beta(n-N)}  \Big  \rangle_{\| \mathbf m_k^N \| \le 1}  \\
\Big \langle ( \det M^\dagger M)^{h/2}  \Big  \rangle_{\| \mathbf m_k^n \| = 1} & =
A_{\beta,n,N} \Big ( { \sigma_{\beta N} \over  \sigma_{\beta n} }    \Big )^N 
\Big \langle | \det M_N |^{h + \beta(n-N)} \Big  \rangle_{\| \mathbf m_k^N \| = 1} \\
\Big \langle ( \det M^\dagger M)^{h/2} \Big \rangle_{{\rm G}^n} & =
A_{\beta,n,N}
\Big ( {1 \over \pi} \Big )^{\beta N (n - N)/2}  \Big \langle | \det M_N |^{h + \beta(n-N)} \Big \rangle_{{\rm G}^N},
\end{align*}
where in the final equation the subscripts ${\rm G}^n$ and ${\rm G}^N$ refer to the Gaussian distribution on $M$ and $M_N$.
\end{corollary}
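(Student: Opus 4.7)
The plan is to apply Corollary \ref{C3} directly, after observing that each of the three PDFs in the statement depends on $M$ only through the column norms $\|\mathbf{m}_k\|^2 = (M^\dagger M)_{kk}$. Consequently, each weight is a function of $W = M^\dagger M$, and its functional form transfers verbatim between the $n \times N$ and $N \times N$ settings. For the Gaussian this is immediate from $\mathrm{Tr}\, M^\dagger M = \sum_k \|\mathbf{m}_k\|^2$; for the ball and sphere cases, the indicator $\prod_k \chi_{\|\mathbf{m}_k\| \le 1}$ and the delta product $\prod_k \delta(1 - \|\mathbf{m}_k\|)$ are manifestly functions of the diagonal of $W$.

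The main step is to apply (\ref{15}) with $f(W) = (\det W)^{h/2} p(W)$, where $p$ denotes the relevant unnormalized column-norm weight. The Jacobian factor $(\det W)^{\beta(n-N)/2}$ produced by (\ref{15}) combines with $(\det W)^{h/2}$ to give $|\det M_N|^{h + \beta(n-N)}$ on the right, yielding
\begin{equation*}
\int_{\mathcal{M}_{n \times N}^\beta} (\det M^\dagger M)^{h/2}\, p(M^\dagger M) \, (\mathrm{d}M) = A_{\beta,n,N} \int_{\mathcal{M}_{N \times N}^\beta} |\det M_N|^{h + \beta(n-N)}\, p(M_N^\dagger M_N) \, (\mathrm{d}M_N).
\end{equation*}
This identity holds uniformly for all three choices of $p$.

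It then remains to convert unnormalized integrals into expectations. The single-column normalizations are obtained by passing to polar coordinates: $\int_{\|\mathbf{m}\| \le 1} (\mathrm{d}\mathbf{m}) = \sigma_{\beta n}/(\beta n)$ for the ball, $\int_{\|\mathbf{m}\| = 1} (\mathrm{d}\mathbf{m}) = \sigma_{\beta n}$ for the sphere, and $\int e^{-\|\mathbf{m}\|^2} (\mathrm{d}\mathbf{m}) = \pi^{\beta n/2}$ in the Gaussian case. Dividing the identity above by the $n$-version of the normalization and multiplying by the $N$-version (so as to produce the normalized $N \times N$ expectation on the right) yields the three prefactors $(n \sigma_{\beta N}/(N \sigma_{\beta n}))^N$, $(\sigma_{\beta N}/\sigma_{\beta n})^N$, and $(1/\pi)^{\beta N (n-N)/2}$, which multiply $A_{\beta,n,N}$ in the three identities of the corollary.

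I expect no real obstacle: the argument is a direct application of Corollary \ref{C3} followed by straightforward normalization bookkeeping, with the three cases handled in parallel. The only point needing verification is that each of the three weights genuinely qualifies as a function of $M^\dagger M$ so that (\ref{15}) is applicable, as checked in the first paragraph.
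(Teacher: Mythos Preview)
Your proposal is correct and follows essentially the same approach as the paper: the corollary is stated without proof, but the preceding sentence indicates it is obtained by inserting the column-norm weights into the right-hand side of (\ref{15}) (Corollary~\ref{C3}), which is exactly what you do, together with the normalization bookkeeping you describe. The only cosmetic difference is that the paper has already recast (\ref{15}) as (\ref{25}) for column-length weights via the Blaschke--Petkantschin route, whereas you appeal to Corollary~\ref{C3} directly by noting the weights are functions of the diagonal of $M^\dagger M$; the two are equivalent here.
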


\begin{remark} Generally, for a bounded convex set $K \in \mathbb R^N$, and with $B_N$ the unit $N$-ball, Steiner's
formula gives a polynomial expansion for the volume of the particular smoothing of $K$
defined by Minkowski addition according to $K + r B_N$. Thus \cite{Sc14}
$$
{\rm vol}_N(K + r B_N) = \sum_{k=0}^N r^{N-k} \kappa_{N-k} V_k(K), \qquad r>0,
$$
where $\kappa_k = {1 \over k} \sigma_k$ is the volume of $B_k$,
and $V_k(K)$ is the so-called $k$-th intrinsic volume of $K$, given by
 \begin{equation}\label{CD1}
 V_k(K) = \binom{N}{k} {\kappa_N \over \kappa_k \kappa_{N-k}}
 \int_{A \in G_{k,N}^{\beta = 1}}
 \Big \langle {\rm vol}_k ( \Pi_A (K) \Big \rangle_{\Pi_A \in G_{k,N}} \, {\rm d} \omega_{N,n}^{\beta = 1},
 \end{equation}
 with $\Pi_A(K)$ denoting the orthogonal projection of $K$ onto the random subspace $A$. With $K$ the parallelepiped
 formed by the columns of an $N \times N$ random matrix $M$, and with the distribution of the columns of $M$ dependent only
 on the lengths, we have
   \begin{equation}\label{CD2}
 \Big  \langle  \Big  \langle   {\rm vol}_k ( \Pi_A (K) \Big \rangle_{\Pi_A \in G_{k,N}^{\beta = 1}}  \Big \rangle_K= 
  \Big  \langle ( \det M_k^T M_k)^{1/2}  \Big \rangle_{M_k},
  \end{equation}
where $M_k$ is the $N \times k$ truncation of $M$ to the first $k$ columns. The average on the RHS of (\ref{CD2}) can
be read off from Corollary \ref{C3.8} with $k=1$, $\beta = 1$, $n \mapsto N$ and $N \mapsto k$. Substituting in
(\ref{CD1}) gives, for the Gaussian case,
  \begin{equation}\label{CD3}
  \Big  \langle   V_k(K)  \Big \rangle_{G^{\rm N}} = \binom{N}{k} {\kappa_N \over \kappa_k \kappa_{N-k}}  \pi^{k/2} {\sigma_{N-k+1} \over \sigma_{N+1}}.
   \end{equation}
   \end{remark}

\subsection{Moments of Determinants --- An Affine Setting}
In the real case $D:=| \det M |$ is equal to the volume of the parallelpiped formed by the
columns of $M$, or equivalently is $N!$ times the volume of the simplex with vertices formed by columns of
$M$ together with the origin. It is an elementary fact that $D$ with
$\tilde M = [ \mathbf m_k^N - \mathbf m_0^N ]_{k=1}^N$ is equal to $N!$ times  the volume of the simplex with vertices
given by $\{\mathbf m_k^N \}_{k=0}^N$ \cite{Ma99}. This geometric interpretation has motivated a number of
studies into the moments of $D$ with $\tilde M$ of this structure and $\{\mathbf m_k^N \}_{k=0}^N$ chosen according to
certain probability distributions
 \cite{Ki69,Mi71,RM79,Ma99}. The primary tool is the affine Blascke--Petkantschin decomposition of measure (\ref{19.3})
 in the real case. Following the method of derivation from \cite{Mi71}, we can extend these computations to the
 complex and real quaternion cases, thus obtaining an affine analogue of Proposition \ref{P7}.

\begin{proposition}\label{P8}
Let the $N \times N$ matrix $M$ be defined in terms of vectors $\{\mathbf m_k^N \}_{k=0}^N$,
$m_k^N \in  (\mathbb F_\beta)^N$, $(k=0,\dots,N)$, by $\tilde M = [ \mathbf m_k^N - \mathbf m_0^N ]_{k=1}^N$.
Define the Euler beta function by
 \begin{equation}\label{Beta}
 B(x,y) = \int_0^1 t^{x-1} (1 - t)^{y-1} \, \mathrm dt = {\Gamma (x) \Gamma(y) \over \Gamma (x + y)}.
 \end{equation}
For values of $q$ such that both sides are well defined we have
 \begin{multline}\label{36}
\Big ( {\beta N  \over \sigma_{\beta N}} \Big )^{N+1}
\int_{ \| \mathbf m_i^N \| \le 1}
|\det \tilde M|^{q} \, (\mathrm d \mathbf m_0^N) \cdots  (\mathrm d \mathbf m_N^{N}) \\ = {2 \over  \sigma_{\beta (n - N)} B({\beta \over 2}(n-N), 
{\beta \over 2} N(n+1) + 1)}
\prod_{i=1}^N\frac{\sigma_{\beta(N-i+1)}}{\sigma_{\beta(n-i+1)}}\,
\Big ( {N  \sigma_{\beta n} \over n  \sigma_{\beta N}} \Big )^{N +1} \bigg |_{n = N + q/\beta},
\end{multline}
\begin{multline}\label{36'}
\Big ( {1  \over \sigma_{\beta N}} \Big )^{N+1}
\int_{ \| \mathbf m_i^N \| = 1}
|\det \tilde M|^{q} \, (\mathrm d \mathbf m_0^N) \cdots  (\mathrm d \mathbf m_N^{N}) \\ = {2 \over  \sigma_{\beta (n - N)} B({\beta \over 2}(n-N), 
{\beta \over 2} N(n+1) + 1)}
\prod_{i=1}^N\frac{\sigma_{\beta(N-i+1)}}{\sigma_{\beta(n-i+1)}}\,
\Big ( {  \sigma_{\beta n} \over   \sigma_{\beta N}} \Big )^{N +1} \bigg |_{n = N + q/\beta},
\end{multline}
and
\begin{multline}\label{37}
 \Big ( {1 \over \pi} \Big )^{\beta N(N+1)/2}
 \int_{\mathbf m_l^N \in (\mathbb F_\beta)^N}   e^{- \sum_{l=0}^N \| \mathbf m_l^N \|^2 }   |\det \tilde M|^{q} \,  \, (\mathrm d \mathbf m_0^N) \cdots  (\mathrm d \mathbf m_N^N) \\
 =  \Big ( {N+1 \over  \pi} \Big )^{\beta ( n - N)/2}
  \prod_{i=1}^N\frac{\sigma_{\beta(N-i+1)}}{\sigma_{\beta(n-i+1)}}\, 
\Big ( {1 \over \pi} \Big )^{\beta (N+1) (N - n)/2}  \bigg |_{n = N + q/\beta}.
\end{multline}
\end{proposition}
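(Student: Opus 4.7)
The three identities (\ref{36}), (\ref{36'}), (\ref{37}) will all be obtained by the Miles trick of applying the affine Blaschke--Petkantschin formula (\ref{19.3}) to an auxiliary integral in the higher-dimensional space $(\mathbb F_\beta)^n$, with $n=N+q/\beta$, and comparing to a direct evaluation. More precisely, I would take $N+1$ vectors $\mathbf v_0^n,\dots,\mathbf v_N^n\in(\mathbb F_\beta)^n$ weighted by, respectively, the unit-ball indicator $\prod_k\chi_{\|\mathbf v_k^n\|\le 1}$, the sphere delta $\prod_k \delta(\|\mathbf v_k^n\|^2-1)$, or the Gaussian factor $e^{-\sum_k\|\mathbf v_k^n\|^2}$. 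The direct evaluation is immediate in each case: the $(N+1)$-st power of the unit ball volume $\sigma_{\beta n}/(\beta n)$, of the half-sphere mass $\sigma_{\beta n}/2$ (after the delta reduction $\int\delta(\|\mathbf v\|^2-1)\,\mathrm d\mathbf v=\sigma_{\beta n}/2$), or of the Gaussian normaliser $\pi^{\beta n/2}$. Applying (\ref{19.3}) to the same integral introduces the factor $|\det[\mathbf v_k^N-\mathbf v_0^N]_{k=1}^N|^{\beta(n-N)}$, which is exactly the target integrand $|\det\tilde M|^q$ at $q=\beta(n-N)$.

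The bookkeeping step uses (\ref{19.4}) to express the weight in the decomposed variables via $\|\mathbf v_k^n\|^2=\|\mathbf v_k^N\|^2+s^2$ with $s=\|\mathbf r\|$. In the ball and sphere cases I would then rescale $\mathbf v_k^N=\sqrt{1-s^2}\,\mathbf u_k$ to pull out the $s$-dependence: the Jacobian from $\prod_k\mathrm d\mathbf v_k^N$ (or from the $(\beta N-1)$-dimensional surface measure in the sphere case, after re-collecting a delta on each $\mathbf v_k^N$), together with the factor of $(1-s^2)^{N\beta(n-N)/2}$ from $|\det\tilde V|^{\beta(n-N)}=(1-s^2)^{N\beta(n-N)/2}|\det\tilde U|^{\beta(n-N)}$, combines into a single explicit power of $(1-s^2)$. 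Combining with the polar form $\mathrm d S_{n-N}^{\perp,\beta}=\sigma_{\beta(n-N)}s^{\beta(n-N)-1}\mathrm ds\,\mathrm dS(\hat{\mathbf r})$ and substituting $t=s^2$ reduces the $s$-integration to an Euler beta function of the form (\ref{Beta}). In the Gaussian case the weight factors as $e^{-(N+1)s^2}\prod_k e^{-\|\mathbf v_k^N\|^2}$, so the $s$-integration separates as a Gamma integral that combines with $\sigma_{\beta(n-N)}$ to give $(\pi/(N+1))^{\beta(n-N)/2}$.

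Solving in each case for the integral over unit-ball, unit-sphere, or Gaussian data in $(\mathbb F_\beta)^N$, and inserting the formula for $\mathrm{vol}(G_{N,n}^\beta)$ from (\ref{18'}), yields each of (\ref{36}), (\ref{36'}), (\ref{37}) when $n-N\in\mathbb Z_{\ge 0}$. Extension to general complex $q$ for which both sides are well defined is by Carlson's theorem, exactly as in the proof of Proposition \ref{P7}. The main technical obstacle is the sphere case: because Blaschke--Petkantschin operates on full Lebesgue measure, I must insert delta functions $\delta(\|\mathbf v_k^n\|^2-1)$ to access it, then re-collect a delta on each $\mathbf v_k^N$ localising on the sphere of radius $\sqrt{1-s^2}$, and track the resulting powers of $(1-s^2)$ in concert with the Jacobian and surface-measure scalings. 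The ball and Gaussian derivations are more straightforward because no delta manipulation is needed.
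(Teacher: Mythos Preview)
Your proposal is correct and follows essentially the same approach as the paper: apply the affine Blaschke--Petkantschin formula (\ref{19.3}) to a product of radially symmetric weights in $(\mathbb F_\beta)^n$, use (\ref{19.4}) and the polar form of $\mathrm dS_{n-N}^{\perp,\beta}$ to separate the $r$-integration, then rescale (or, for the sphere, change radial variables) so that the $r$-integral becomes an Euler beta integral in the ball and sphere cases and a Gamma integral in the Gaussian case. The only cosmetic differences are your use of $\delta(\|\cdot\|^2-1)$ rather than the paper's $\delta(\|\cdot\|-1)$, and that the paper handles the sphere case via the radial substitution $\|\mathbf v_k^N\|^2 = s_k^2 - r^2$ rather than a direct rescaling; your explicit mention of Carlson's theorem is also appropriate, as the paper relies on it implicitly via Proposition~\ref{P7}.
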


\begin{proof}
Consider the affine Blascke--Petkantschin decomposition of measure (\ref{19.3}). Multiply both sides by
$\prod_{i=0}^N p(\| \mathbf v_i^n \|)$, where $p(\| \mathbf v^n \|)$ is normalised to integrate to unity.
Making use of (\ref{18'}) and (\ref{19.4}) we see that then
\begin{multline}\label{38}
1 =   \prod_{i=1}^N\frac{\sigma_{\beta(n-i+1)}}{\sigma_{\beta(N-i+1)}}\, 
 \int_{\mathbf v_k^N \in (\mathbb F_\beta)^{N+1}}   (\mathrm d \mathbf v_0^N) \cdots  (\mathrm d \mathbf v_N^N) \,
 \int  \mathrm{d} S_{n-N}^{\perp,\beta} \\
 \times \prod_{i=0}^N p( (\| \mathbf v_i^N \|^2 +  \|  \mathbf r \|^2)^{1/2}) \,| \det  [ \mathbf v_k^N - \mathbf v_0^N ]_{k=1}^N |^{\beta (n- N)}.
 \end{multline}
 From the meaning of $\mathbf r$ given in the text below (\ref{19.2}), we have $\|  \mathbf r \| = r$, with $r$ appearing in the
 polar decomposition of $ S_{n-N}^{\perp,\beta} $ according to the infinitesimal formula
 \begin{equation}\label{39}
 d  S_{n-N}^{\perp,\beta}  = r^{\beta (n - N) - 1} \mathrm dr \, (U_1^\dagger d U_1),
 \end{equation}
 where $U_1 \in \mathcal V_{1,n-N}^\beta$. Substituting (\ref{39}) in (\ref{38}) we see that the integrand is independent of $U_1$.
 Integrating over this quantity by making use of (\ref{14}) with $N=1$, $n \mapsto n - N$, we read off from (\ref{38}) that
 \begin{multline}\label{40}
 \int_{\mathbf v_k^N \in (\mathbb F_\beta)^{N+1}}   (\mathrm d \mathbf v_0^N) \cdots  (\mathrm d \mathbf v_N^N) \,
 \int_0^\infty \mathrm dr \, r^{\beta ( n - N) - 1} \,
 \prod_{i=0}^N p( (\| \mathbf v_i^N \|^2 +  \|  \mathbf r \|^2)^{1/2}) \\
 \times  \,| \det  [ \mathbf v_k^N - \mathbf v_0^N ]_{k=1}^N|^{\beta (n- N)} 
 =  {1 \over \sigma_{\beta (n - N)}} \prod_{i=1}^N\frac{\sigma_{\beta(N-i+1)}}{\sigma_{\beta(n-i+1)}}.
 \end{multline}
 
 Let us now specify to the choice of $p(\| \mathbf v^n \|) $ to be uniform on $\| \mathbf v^n \| \le 1$, by choosing
 \begin{equation}\label{27}
{p}(x) = {\beta n \over \sigma_{\beta n}} \chi_{0 < x < 1},
\end{equation}
(cf.~(\ref{27})). The integration over $\mathbf v_k^N \in (\mathbb F_\beta)^{N+1}$ is now restricted to $\| \mathbf v_k^N \|^2 \le
1 - r^2$, and with this done $ \prod_{i=0}^N p( (\| \mathbf v_i^N \|^2 +  \|  \mathbf r \|^2)^{1/2})$ can be replaced by
$(\beta n/  \sigma_{\beta n})^{N+1}$. Now introduce the scaled variables $\mathbf m_k^N$ according to
$\mathbf v_k^N = (1 - r^2)^{1/2}\mathbf m_k^N $.  The LHS of (\ref{40}) then reads
 \begin{multline*}
\Big ( {\beta n \over  \sigma_{\beta n} }\Big )^{N+1}
 \int_{\| \mathbf m_k^N \| \le 1 }   (\mathrm d \mathbf m_0^N) \cdots  (\mathrm d \mathbf m_N^N) \, | \det \tilde M|^{\beta (n - N)} \\
 \times
 \int_0^1 \mathrm dr \, r^{\beta ( n - N) - 1} (1 - r^2)^{\beta N (n + 1)/2}. 
  \end{multline*}
 The integration over $r$ thus factorises and is furthermore, after a simple change of variables, an example of the Euler beta
 integral (\ref{Beta}). The result (\ref{36}) now follows. The derivation of (\ref{36'}) is very similar, starting with
 $p(x) = (1/ \sigma_{\beta n} ) \delta (x - 1)$. The key step is to change variables $\| \mathbf v_k^N \|^2 = s_k^2 - r^2$, which 
 allows the dependence on $s_k$ and $r$ to be integrated out.
 
 As our third and final choice of $p(\| \mathbf v^n \|)$ we set
 $$
 p(x) = \Big ( {1 \over \pi} \Big )^{\beta n /2} e^{- x^2}
 $$
 (cf.~(\ref{27b})). The LHS of (\ref{40}) then reads
  \begin{multline*}
  \Big ( {1 \over \pi} \Big )^{\beta n(N+1) /2}
   \int_{\mathbf m_k^N \in (\mathbb F_\beta)^{N+1}}   (\mathrm d \mathbf m_0^N) \cdots  (\mathrm d \mathbf m_N^N) \,
   e^{- \sum_{i=0}^N \| \mathbf m_i \|^2}  | \det \tilde M |^{\beta ( n - N)} \\ \times
 \int_0^\infty \mathrm dr \, r^{\beta ( n - N) - 1} e^{- (N+1) r^2},
 \end{multline*} 
 which implies (\ref{37}), after noting $2/(\sigma_{s} \Gamma(s)) = \pi^{-s}$.
 
\end{proof}

\begin{remark}
In relation to Proposition \ref{P7} we remarked that the result for the uniform distribution on $\| \mathbf m_i^N \| = 1$ could
be deduced from either of the results for the uniform distribution on $\| \mathbf m_i^N \| \le 1$ or the Gaussian distribution.
Notwithstanding the relationship between (\ref{35}) and (\ref{35'}) being analogous to that between
(\ref{36}) and (\ref{36'}), due to the different meaning of $M$ in  Proposition \ref{P8} we have not been able to deduce 
(\ref{36'}) as a corollary of the other results.
\end{remark}

\begin{remark}
Setting $q=\beta$ in (\ref{36}) and simplifying gives
 \begin{equation}
 \Big \langle | \det \tilde{M} |^\beta \Big \rangle_{\| \mathbf m_k^N \| \le 1} =
 {\Gamma({\beta \over 2}(N+1)^2 + 1) \Gamma({\beta \over 2}(N+1)) \over \Gamma({\beta \over 2}) \Gamma({\beta \over 2}N(N+2)+1)} 
\bigg ( { \Gamma({\beta \over 2}N+1) \over  \Gamma({\beta (N+1)\over 2}+1 } \bigg )^{N+1}.
\end{equation}
For $\beta = 1$ this can be expressed in terms of binomial coefficients according to 
 \begin{equation}
 {1 \over N! \sigma_N} \Big \langle | \det \tilde{M} | \Big \rangle_{\| \mathbf m_k^N \| \le 1} = 2^{-N}
 \binom{(N+1)}{(N+1)/2}^{N+1} \Big /  \binom{(N+1)^2}{(N+1)^2/2},
\end{equation}
which is a formula due to Kingman \cite{Ki69} and highlighted in \cite{Kl69} and \cite[End of Ch.~5]{So78}.

The substitution $q = \beta$ in the Gaussian case (\ref{37}) gives
 \begin{equation}
{1 \over N!}  \Big \langle | \det \tilde{M} |^\beta \Big \rangle_{(\mathbf m_{k}^{N})_{i,\beta} \in {\rm N}[0,1/\sqrt{2}] }  =
{(N+1)^{\beta/2} \over N!} {\Gamma({\beta \over 2}(N+1)) \over \Gamma({\beta\over 2})}.
 \end{equation}
 With $\beta = 1$ this is equivalent to a result contained in Efron \cite{Ef65}; see also the review section of 
 \cite{MCR10}.
 
\end{remark} 

Analogous to Corollary \ref{C3.8}, we can use knowledge of the averages in Proposition \ref{P8},
together with Corollary \ref{C3.6}, to specify moments of
$\det \tilde M^\dagger \tilde M$ when $\tilde{M} = [\mathbf m_k^n - \mathbf m_0^n]_{k=1}^N$, and thus of size $n \times N$.
This extends the working of  \cite{Mi71} in the real case.

\begin{corollary}\label{C3.10}
With $\tilde M$ specified as above, let $\tilde M_N = [\mathbf m_k^N - \mathbf m_0^N]_{k=1}^N$ correspond to the $N \times N$ case.
We have
\begin{align*}
\Big \langle ( \det \tilde M^\dagger \tilde M)^{h/2}  \Big  \rangle_{\| \mathbf m_k^n \| \le 1} & =
\Big \langle | \det \tilde M_N |^{h + \beta(n-N)}  \Big  \rangle_{\| \mathbf m_k^N \| \le 1}\Big /
 \Big \langle | \det \tilde M_N |^{\beta(n-N)}  \Big  \rangle_{\| \mathbf m_k^N \| \le 1} \\
\Big \langle ( \det \tilde M^\dagger \tilde M)^{h/2}  \Big  \rangle_{\| \mathbf m_k^n \| = 1} & =
\Big \langle | \det \tilde M_N |^{h + \beta(n-N)} \Big  \rangle_{\| \mathbf m_k^N \| = 1} \Big /
\Big \langle | \det \tilde M_N |^{\beta(n-N)} \Big  \rangle_{\| \mathbf m_k^N \| = 1}
\\
\Big \langle ( \det \tilde M^\dagger \tilde M)^{h/2} \Big \rangle_{{\rm G}^n} & =
 \Big \langle | \det \tilde M_N |^{h + \beta(n-N)} \Big \rangle_{{\rm G}^N}  \Big /
  \Big \langle | \det \tilde M_N |^{\beta(n-N)} \Big \rangle_{{\rm G}^N}
\end{align*}
where in the final equation the subscripts ${\rm G}^n$ and ${\rm G}^N$ refer to the Gaussian distribution on $\tilde M$ and $\tilde M_N$.
\end{corollary}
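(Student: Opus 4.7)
The plan is to mirror the proof of Proposition \ref{P8}, combining the affine Blaschke--Petkantschin decomposition of measure (\ref{19.3}) with the polar parameterisation of the orthogonal complement (\ref{39}). The key structural observation is that under (\ref{19.3}) we have $\tilde{M} = B \tilde{M}_N$ with $B^\dagger B = I_N$, which gives the clean identity
\[
(\det \tilde{M}^\dagger \tilde{M})^{h/2} = |\det \tilde{M}_N|^{h}
\]
and converts the Jacobian into a shift $|\det \tilde{M}_N|^{\beta(n-N)}$ in the determinantal weight on the right. Moreover, since $\mathbf{m}_k^n = B \mathbf{m}_k^N + \mathbf{r}$ with $\mathbf{r}$ common to all $k$, each norm satisfies $\|\mathbf{m}_k^n\|^2 = \|\mathbf{m}_k^N\|^2 + r^2$ where $r = \|\mathbf{r}\|$, so all three rotation-invariant densities in question split into a function of $\{\|\mathbf{m}_k^N\|\}$ and a function of $r$ alone.

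The Gaussian case is cleanest and I would do it first. Applying (\ref{19.3}) and (\ref{39}) to the numerator $\int (\det \tilde{M}^\dagger \tilde{M})^{h/2} \prod e^{-\|\mathbf{m}_k^n\|^2} \prod \mathrm{d} \mathbf{m}_k^n$, the Grassmannian integral contributes $\operatorname{vol}G_{N,n}^\beta = \prod_i \sigma_{\beta(n-i+1)}/\sigma_{\beta(N-i+1)}$ by (\ref{18'}), and the $dS_{n-N}^{\perp,\beta}$-integral reduces to $\sigma_{\beta(n-N)} \int_0^\infty r^{\beta(n-N)-1} e^{-(N+1)r^2}\, \mathrm{d}r$, which is independent of $h$. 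Performing the same manipulation on the normalising constant (equivalently, on the statement with $h$ replaced by $0$) and dividing, the Grassmannian volume, $\sigma_{\beta(n-N)}$ factor, $r$-integral, and the Gaussian normalisation constants $\pi^{\beta N(N+1)/2}$ all cancel, leaving precisely the claimed ratio of $N \times N$ Gaussian averages of $|\det \tilde{M}_N|^{h+\beta(n-N)}$ to $|\det \tilde{M}_N|^{\beta(n-N)}$.

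For the uniform-ball and uniform-sphere cases the same outline applies, but the density $\chi_{\|\mathbf{m}_k^n\|\leq 1}$ (or $\delta(1 - \|\mathbf{m}_k^n\|)$) couples $r$ to $\{\mathbf{m}_k^N\}$ through the constraint $\|\mathbf{m}_k^N\|^2 + r^2 \leq 1$ (respectively $= 1$). I would uncouple this by introducing the scaling $\mathbf{m}_k^N = \sqrt{1-r^2}\, \mathbf{u}_k^N$ (and the analogous substitution for the sphere, as in the proof of (\ref{36'}) in Proposition \ref{P8}), which renders the $\mathbf{u}_k^N$-integration independent of $r$ and reduces the remaining $r$-integral to an Euler Beta function via (\ref{Beta}). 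The ratio of numerator to denominator then produces the ratio of $N \times N$ averages together with a ratio of Beta-function factors; tracking these factors via the closed-form in Proposition \ref{P8} is needed to see them combine so as to leave the claimed identity.

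The main obstacle is exactly this bookkeeping in the ball and sphere cases: unlike the Gaussian case, the $r$-dependence does not factor off the $\mathbf{m}_k^N$-integration automatically, and one must verify that the Beta-function factors produced by the $r$-integrals at $q = h + \beta(n-N)$ and at $q = \beta(n-N)$ are those implicit in Proposition \ref{P8} itself, so that the two expressions telescope into a clean ratio. Once this is checked for integer values of $h$ of the form $\beta(n-N)$, Carlson's theorem (as used in the proof of Proposition \ref{P7}) extends the identity to all $h$ for which both sides are well defined.
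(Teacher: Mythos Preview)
Your overall strategy --- apply the affine Blaschke--Petkantschin decomposition (\ref{19.3}), use $\tilde M = B\tilde M_N$ with $B^\dagger B=I_N$ to convert $(\det\tilde M^\dagger\tilde M)^{h/2}$ into $|\det\tilde M_N|^{h}$, then separate off the Grassmannian and the $r$-integral as in the proof of Proposition~\ref{P8} --- is exactly the route the paper indicates (``use knowledge of the averages in Proposition~\ref{P8}, together with Corollary~\ref{C3.6}''). The Gaussian case is handled correctly: the $r$-integral $\int_0^\infty r^{\beta(n-N)-1}e^{-(N+1)r^2}\,\mathrm dr$ is indeed independent of $h$, so taking the ratio with the $h=0$ case leaves precisely the claimed quotient of $N\times N$ Gaussian averages.

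The gap is in the ball and sphere cases. You correctly flag that after the rescaling $\mathbf m_k^N=\sqrt{1-r^2}\,\mathbf u_k^N$ the $r$-integral acquires an $h$-dependent exponent, namely
\[
\int_0^1 r^{\beta(n-N)-1}(1-r^2)^{Nh/2+\beta N(n+1)/2}\,\mathrm dr=\tfrac12\,B\!\Big(\tfrac{\beta(n-N)}{2},\,\tfrac{Nh+\beta N(n+1)}{2}+1\Big),
\]
so that dividing by the $h=0$ case leaves the extra factor
\[
R(h)=\frac{B\big(\tfrac{\beta(n-N)}{2},\tfrac{Nh+\beta N(n+1)}{2}+1\big)}{B\big(\tfrac{\beta(n-N)}{2},\tfrac{\beta N(n+1)}{2}+1\big)}
\]
multiplying $\langle|\det\tilde M_N|^{h+\beta(n-N)}\rangle/\langle|\det\tilde M_N|^{\beta(n-N)}\rangle$. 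You then assert that ``tracking these factors via the closed form in Proposition~\ref{P8}'' shows they combine to give the stated identity. This is the step that fails: the Beta function appearing inside (\ref{36}) at $q=h+\beta(n-N)$ has first argument $\tfrac{h+\beta(n-N)}{2}$, not $\tfrac{\beta(n-N)}{2}$, so it does not cancel $R(h)$. Concretely, take $\beta=1$, $N=1$, $n=2$, $h=2$: the left-hand side is $\langle\|\mathbf m_1^2-\mathbf m_0^2\|^2\rangle=2\langle\|\mathbf m\|^2\rangle=1$ for $\mathbf m$ uniform in the unit disk, whereas the right-hand ratio (with $m_0,m_1$ uniform on $[-1,1]$ and $\langle|m_1-m_0|^q\rangle=2^{q+1}/((q+1)(q+2))$) equals $6/5$; the discrepancy is precisely $R(2)=5/6$. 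So the promised cancellation does not occur, and the identity as written does not hold in the ball (and, by the same mechanism, the sphere) case. You should carry out the bookkeeping rather than assume it closes --- what your computation actually yields is the ratio on the right multiplied by $R(h)$.
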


The simplest case is when $N=1$. Then $ \det \tilde M^\dagger \tilde M = \| \mathbf m_1^n - \mathbf m_0^n\|^2$, which is thus the
squared Euclidean length between the two vectors $ \mathbf m_1^n$ and $ \mathbf m_0^n$. This must be a function of the
combination $\beta n$, and $h$, only since the modulus squared of a complex and real quaternion number is equal to the square of $\beta$ real
numbers ($\beta = 2, 4$). Indeed we can check from Proposition \ref{P8} that this property holds true. For example, in the
Gaussian case with $N=1$
$$
\Big \langle ( \det \tilde M^\dagger \tilde M)^{h/2} \Big \rangle_{{\rm G}^n} = 2^{h/2} {\Gamma((\beta n + h)/2) \over
\Gamma(\beta n /2)}.
$$
Note from this that the  distribution of $ \| \mathbf m_1^n - \mathbf m_0^n\|^2$ is therefore equal to 
the Gamma distribution $\Gamma[\beta n /2 -1, 1/2]$. This can be anticipated, as with each independent real component
of $ \mathbf m_1^n$ and $ \mathbf m_0^n$ distributed as N$[0,1/\sqrt{2}]$, their difference is distributed as N[0,1], so the
problem reduces to asking for the sum of the square of $\beta n$ standard real Gaussians.

\section*{Acknowledgements}
This research is  part of the program of study supported by the 
ARC Centre of Excellence for Mathematical \& Statistical Frontiers.

\nopagebreak

\providecommand{\bysame}{\leavevmode\hbox to3em{\hrulefill}\thinspace}
\providecommand{\MR}{\relax\ifhmode\unskip\space\fi MR }
\providecommand{\MRhref}[2]{%
  \href{http://www.ams.org/mathscinet-getitem?mr=#1}{#2}
}
\providecommand{\href}[2]{#2}

\end{document}